\documentclass[11pt]{amsart}

\usepackage[top=1.5in, bottom=1in, left=0.9in, right=0.9in]{geometry}

\usepackage{amscd,amsmath,amssymb,fancyhdr,color}
\usepackage[utf8]{inputenc}
\usepackage{amsfonts}
\usepackage{amsthm}
\usepackage{accents}
\usepackage{graphicx}
\usepackage{float}
\usepackage{subcaption}
\usepackage{verbatim}
\usepackage{indentfirst}
\usepackage{dsfont}
\usepackage{tikz}
\usepackage{tikz-cd}
\usetikzlibrary{matrix}
\usepackage[all]{xy}
\usepackage{enumerate}
\usepackage{extarrows}
\usepackage{csquotes}

\usepackage{scalerel,stackengine}
\stackMath
\newcommand\reallywidehat[1]{%
	\savestack{\tmpbox}{\stretchto{%
			\scaleto{%
				\scalerel*[\widthof{\ensuremath{#1}}]{\kern-.6pt\bigwedge\kern-.6pt}%
				{\rule[-\textheight/2]{1ex}{\textheight}}
			}{\textheight}%
		}{0.5ex}}%
	\stackon[1pt]{#1}{\tmpbox}%
}

\usepackage{faktor}

\usepackage{BOONDOX-uprscr}

\usepackage[backref=page]{hyperref}
\renewcommand*{\backref}[1]{}
\renewcommand*{\backrefalt}[4]{%
	\ifcase #1 (Not cited.)%
	\or        (Cited on page~#2.)%
	\else      (Cited on pages~#2.)%
	\fi}

\hypersetup{
	colorlinks   = true,
	citecolor    = magenta
}

\newcommand{\K}{K\"ahler}

\numberwithin{equation}{section}

\def\eqref#1{(\ref{#1})}

\newcommand{\C}{{\mathbb C}}
\newcommand{\R}{{\mathbb R}}

\newcommand{\del}{\partial}
\newcommand{\delb}{\overline{\partial}}

\def\1{\sqrt{-1}\:}

\newcommand{\cntrct}                
{\hspace{2pt}\raisebox{1pt}{\text{$\lrcorner$}}\hspace{2pt}}

\renewcommand{\dim}{\operatorname{dim}}

\newcommand{\Img}{\operatorname{Im}}

\newcommand{\ie}{{\em i.e. }}
\newcommand{\eg}{{\em e.g. }}

\renewcommand{\to}{\longrightarrow}

\newcounter{Mycounter}[section]
\newcounter{lemma}[section]
\newcounter{claim}[section]
\newcounter{sublemma}[section]
\newcounter{corollary}[section]
\newcounter{theorem}[section]
\newcounter{conjecture}[section]
\newcounter{proposition}[section]
\newcounter{definition}[section]
\newcounter{example}[section]
\newcounter{remark}[section]
\newcounter{problem}[section]
\newcounter{question}[section]
\makeatletter

\@addtoreset{equation}{section}

\@addtoreset{footnote}{section}

\makeatother

\makeatletter
\DeclareRobustCommand*{\mfaktor}[3][]
{
	{ \mathpalette{\mfaktor@impl@}{{#1}{#2}{#3}} }
}
\newcommand*{\mfaktor@impl@}[2]{\mfaktor@impl#1#2}
\newcommand*{\mfaktor@impl}[4]{
	\settoheight{\faktor@zaehlerhoehe}{\ensuremath{#1#2{#3}}}%
	\settoheight{\faktor@nennerhoehe}{\ensuremath{#1#2{#4}}}%
	\raisebox{-0.5\faktor@zaehlerhoehe}{\ensuremath{#1#2{#3}}}%
	\mkern-4mu\diagdown\mkern-5mu%
	\raisebox{0.5\faktor@nennerhoehe}{\ensuremath{#1#2{#4}}}%
}
\makeatother

\usetikzlibrary{arrows,chains,matrix,positioning,scopes}

\makeatletter
\tikzset{join/.code=\tikzset{after node path={%
			\ifx\tikzchainprevious\pgfutil@empty\else(\tikzchainprevious)%
			edge[every join]#1(\tikzchaincurrent)\fi}}}
\makeatother

\tikzset{>=stealth',every on chain/.append style={join},
	every join/.style={->}}

\makeatletter
\newtheorem*{rep@theorem}{\rep@title}
\newcommand{\newreptheorem}[2]{%
	\newenvironment{rep#1}[1]{%
		\def\rep@title{\ref{##1}}%
		\begin{rep@theorem}}%
		{\end{rep@theorem}}}
\makeatother

\newreptheorem{theorem}{Theorem}

\begin{document}
	
	\newpage
	
	\title[On products of locally conformally K\" ahler manifolds]{On products of locally conformally K\" ahler manifolds}

  	\author{Miron Stanciu}
	\address{Miron Stanciu \newline
		\textsc{\indent University of Bucharest, Faculty of Mathematics and Computer Science\newline 
			\indent 14 Academiei Str., Bucharest, Romania \newline
			\indent \indent and \newline
			\indent Institute of Mathematics ``Simion Stoilow'' of the Romanian Academy\newline 
			\indent 21 Calea Grivitei Street, 010702, Bucharest, Romania}}
	\email{miron.stanciu@fmi.unibuc.ro; miron.stanciu@imar.ro}
	
	\thanks{%
		\textbf{Keywords:} Locally conformally \K, lcK product, lcK with potential. \\
		\hspace*{\parindent}\textbf{2020 Mathematics Subject Classification:} 53C55, 32J27}
	
	\date{\today}

	\begin{abstract}
		We prove that no product of positive-dimensional compact complex manifolds admits a strict locally conformally \K \ (lcK) metric. This solves a long-standing conjecture in lcK geometry, previously established only under certain conditions.
	\end{abstract}
	
	\maketitle
	
	\hypersetup{linkcolor=blue}
	\tableofcontents

	\section{Introduction}
	\label{sec:introduction}
	
	A locally conformally \K \ (lcK) manifold is a Hermitian manifold $(M, J, g)$ such that the fundamental form $\omega = g(J \cdot, \cdot)$ satisfies the differential equation $d\omega = \theta \wedge \omega$ for some closed $1$-form $\theta$, called the Lee form; $\omega$ is then usually referred to as an lcK metric. If $\theta$ is not exact, $\omega$ is called strict lcK. Their systematic study was started by Vaisman \cite{vai76}. In \cite{vai80} he proved that K\" ahler and strict lcK metrics cannot coexist on a compact manifold with respect to the same complex structure. This is one motivation for lcK geometry being studied primarily in the compact case, as there we have a clear separation between \K \ and lcK geometry. A striking fact is that, as opposed to the \K \ case, few general topological obstructions to the existence of strict lcK metrics are known in arbitrary dimension, beyond the necessary condition $b_1 \neq 0$. 
	
	In terms of examples, many non-Kähler compact surfaces are lcK (see \cite[Section 2.4]{ovv24}). In higher dimensions, Hopf manifolds (\cite{ov23}), certain higher-dimensional Kato manifolds (\cite{iop21}) and some Oeljeklaus-Toma (OT) manifolds (\cite{dv23}) are lcK. For a recent comprehensive study on the state of the art in the field, see \cite{ovbook}.
	
	However, while great progress has been made since Vaisman's papers in understanding lcK geometry, it is often the case that questions that have an immediate or straightforward answer in the \K \ setting prove very difficult to solve in full generality on lcK manifolds. For instance, while the stability of a \K \ structure under blow-up is a classical result (\cite{bla56}), it took rather longer and some intermediate results to obtain the precise conditions under which the same is true for an lcK structure (\cite{ovv13}, see also \cite{ps23a}).
	
	An even more elementary question that has proved very difficult is whether lcK structures are ever compatible with taking products. Two facts are immediately clear: first, that the product of two \K \ manifolds is \K, with the sum of the pullbacks of the \K \ forms on the two factors. Second, that the same naive construction does not work when even one of the terms is strict lcK instead of \K. One may therefore ask whether any product of compact lcK manifolds can be endowed with \textit{any} strict lcK metric (compare this with the locally conformally symplectic case, where the answer is affirmative simply because any compact almost complex manifold with nonzero integral $1$-cohomology admits strict lcs structures by \cite{em23}). Since the existence of such a metric would automatically imply that both factors are lcK anyway via restriction, the conjecture answering the question in the negative is usually formulated as follows:
	
	\begin{conjecture}
		If $X$ and $Y$ are compact complex manifolds of positive dimension, then $X \times Y$ admits no strict lcK metric.
	\end{conjecture}
	
	\smallskip
	
	The present paper proves the above conjecture in \ref{thmprodus}.
	
	\smallskip
	
	There have been several partial results related to the conjecture, one of which will be crucial for our proof. In \cite[Corollary 3.3]{ts99}, Tsukada proved that a product between compact Vaisman manifolds (a particular type of lcK manifold) cannot be strict lcK. The conjecture was also known for a product between a compact strict lcK manifold and a compact \K \ manifold of dimension at least $2$ (\cite[Corollary 2]{opv14}) and for the product of a compact complex curve with a compact strict lcK manifold admitting no lcK metric with potential (\cite[Proposition 7]{is19}).
	
	The strongest result to date is given in \cite{ovv24}. There, the authors show that all (at the time) \textit{known} examples of lcK manifolds fall into at least one of three classes: with potential, Inoue-type or containing rational curves. They then prove separately that no product between any compact complex manifold of positive dimension and an lcK manifold belonging to one of the three classes can be strict lcK. This has the effect of proving the conjecture for all up-to-then known examples of lcK manifolds, but it should be noted that there is no result suggesting that any lcK manifold must fall into one of the categories above. In fact, even for surfaces, the argument for the classification being exhaustive is conditional on the Global Spherical Shell (GSS) conjecture being true. 
	
	The idea of \ref{thmprodus} is to reduce the general case to the one where one of the factors is lcK with potential and then apply the product obstruction given in \cite{ovv24}. We do this by taking a hypothetical strict lcK metric on the product and looking at the family of metrics induced on one factor by restriction and their Bott-Chern classes. From this family we are either able to construct an lcK metric on one of the factors with vanishing Bott-Chern class, which leads to an lcK with potential structure, or we use a convex separation argument and an obstruction result \ref{lem} to also arrive at a contradiction.
	
	\medskip
	
	In Section \ref{sec:prelim}, we recall all necessary definitions and go into some more detail on the objects discussed above. Section \ref{sec:main} contains the proof of the main result.
	
	\section{Preliminaries}
	\label{sec:prelim}
	\textbf{Conventions.} Throughout this paper, all manifolds are assumed connected, without boundary. All covering spaces are implicitly Galois, so the deck group acts transitively on the fibers. 
	
	We shall use the conventions from \cite[Chapter 2.A]{bes87} for the complex structure $J$ acting on complex forms on a manifold $(M, J)$. Namely:
	\begin{itemize}
		\item $J\alpha=\mathrm{i}^{q-p} \alpha$, for any $\alpha \in \Omega^{p, q}M$, or equivalently, $J\eta(X_1, \ldots, X_k) = (-1)^k\eta(JX_1, \ldots, JX_k)$;
		\item the fundamental form of a Hermitian metric is given by $\omega (X, Y) := g(JX, Y)$;
		\item the operator $d^c$ is defined as $d^c := -J^{-1}dJ$, where $J^{-1} \alpha = (-1)^{\deg \alpha} J \alpha$. Hence $dd^c = 2i \del \delb$.
	\end{itemize}
	
	\bigskip
	
	\textbf{Locally conformally \K \ manifolds.} We will now recall some basic theory about locally conformally \K \ (lcK) manifolds. There are several equivalent definitions for what an lcK manifold is, and we will use them interchangeably in this paper.
	
	\begin{definition}
		Let $M$ be a complex manifold of complex dimension $n$. As the name implies, a Hermitian form $\omega \in \Omega^{1, 1}(M)$ is called lcK if it admits an open cover $(U_i)_{i \in I}$ and functions $f_i \in \mathcal{C}^\infty(U_i)$ such that $e^{-f_i} \omega$ is \K \ on $U_i$.
	\end{definition}
	
	Although under the above conditions the functions $f_i$ do not necessarily glue together, if $n \ge 2$, it is immediate that $df_i = df_j$ on $U_i \cap U_j$, so they determine a closed (real) $1$-form $\theta$ on $M$. The \K \ condition above is then seen to be equivalent to the formulation below, which is probably the most widespread:
	
	\begin{definition}
		\label{deflck}
		A Hermitian $(1, 1)$-form $\omega$ on $M$ is called an lcK form if there exists a $\theta \in \Omega^1(M)$ with $d\theta = 0$ such that $d\omega = \theta \wedge \omega$.
		
		It is easy to see that, in complex dimension $n \ge 2$, the form $\theta$, if it exists, is uniquely determined by $\omega$. It is called \textit{the Lee form} of $\omega$.
	\end{definition}
		
	\begin{remark}
		The lcK condition is conformally invariant. Indeed, if $\omega$ is lcK with Lee form $\theta$ and $f \in \mathcal{C}^\infty(M)$, then $e^f \omega$ is again lcK with Lee form $\theta + df$.
		
		This shows that, for dimension $n \ge 2$, an lcK metric is \textit{globally conformally \K \  (gcK) } if and only if $\theta$ is exact. If it is not, we call $\omega$ \textit{strict lcK}.
	\end{remark}
	
	\smallskip 
	
	For any closed $1$-form on any complex manifold, it is useful to consider the differential operator that is implicit in \ref{deflck}. We define the operator
	\[
	d_\theta: \Omega^k(M) \to \Omega^{k+1}(M), \ d_\theta = d - \theta \wedge \cdot \ .
	\]
	The lcK condition can then be rewritten as $d_\theta \omega = 0$. Moreover, since $\theta$ is closed, $d_\theta^2 = 0$, so one can consider its cohomology (called the Morse-Novikov cohomology). The operator $d_\theta$ can also be seen as a flat connection in a topologically trivial line bundle over $M$, called the \textit{weight bundle} (see \cite[3.4.2]{ovbook}).
	
	The decomposition $d = \del + \delb$ induces a similar splitting for the operator $d_\theta$:
	\[
	d_\theta = \del_\theta + \delb_\theta, \text{ where } \del_\theta = \del - \theta^{1, 0} \wedge \cdot \text{ and } \delb_\theta = \delb - \theta^{0, 1} \wedge \cdot \ ;
	\]
	clearly $\del_\theta (\Omega^{p, q}(M)) \subset \Omega^{p+1, q}(M)$ and $\delb_\theta (\Omega^{p, q}(M)) \subset \Omega^{p, q+1}(M)$. The operators $d_\theta, \del_\theta, \delb_\theta$ are usually called \textit{twisted} differential operators.
	
	\medskip 
	
	It is not very difficult to prove the following alternative characterization in terms of covering spaces (see \eg \cite[Theorem 3.31]{ovbook}):
	
	\begin{proposition}
		\label{def:lckcover}
		Let $M$ be a complex manifold. Then $M$ is lcK if and only if there exists a \K \ covering $(\tilde{M}, \tilde{\omega})$  such that the deck group acts on $\tilde{\omega}$ by homotheties \ie $\gamma^* \tilde{\omega} = c_\gamma \tilde{\omega}$, $c_\gamma > 0$.
	\end{proposition}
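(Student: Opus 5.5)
The plan is to prove the two directions separately. In both, I use the characterization of \ref{deflck}: $\omega$ is lcK exactly when $d\omega = \theta\wedge\omega$ for some closed $1$-form $\theta$. I will also use the remark on conformal invariance: if $\omega$ has Lee form $\theta$ and $f \in \mathcal{C}^\infty(M)$, then $e^f\omega$ is lcK with Lee form $\theta + df$.

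\emph{($\Rightarrow$)} Let $\omega$ be lcK on $M$ with Lee form $\theta$. Take $\pi:\tilde M \to M$ to be the minimal covering on which $\pi^*\theta$ becomes exact; the universal cover also works. On $\tilde M$ write $\pi^*\theta = d\varphi$ for some $\varphi \in \mathcal{C}^\infty(\tilde M)$, and set $\tilde\omega := e^{-\varphi}\pi^*\omega$. Then
\[
d\tilde\omega = e^{-\varphi}\bigl(-d\varphi\wedge\pi^*\omega + \pi^*\theta\wedge\pi^*\omega\bigr) = 0,
\]
so $\tilde\omega$ is a Kähler form on $\tilde M$. It is positive of type $(1,1)$ because it is a positive multiple of one. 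For a deck transformation $\gamma$ we have $\gamma^*\pi^*\theta = \pi^*\theta$, hence $d(\gamma^*\varphi - \varphi) = 0$. Since $\tilde M$ is connected, $\gamma^*\varphi = \varphi + a_\gamma$ for some constant $a_\gamma$. Consequently
\[
\gamma^*\tilde\omega = e^{-a_\gamma}\tilde\omega,
\]
so $\gamma$ acts by the homothety with $c_\gamma = e^{-a_\gamma} > 0$.

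\emph{($\Leftarrow$)} Suppose $(\tilde M,\tilde\omega)$ is a Kähler covering with $\gamma^*\tilde\omega = c_\gamma\tilde\omega$. The map $\gamma \mapsto c_\gamma$ is a homomorphism to $(\mathbb{R}_{>0},\cdot)$. The goal is a positive function $\psi$ on $\tilde M$ satisfying $\gamma^*\psi = c_\gamma\psi$. Once we have it, the form $\psi^{-1}\tilde\omega$ is deck-invariant and therefore descends to a Hermitian form $\omega$ on $M$. Locally on $M$, on any evenly covered open set $U$, $\omega$ equals $\psi|_U\,\tilde\omega|_U$ transported down, so $\psi^{-1}\omega$ is Kähler there. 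This is precisely the first definition of lcK, with $f_i = \log\psi$ on $U$.

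To construct $\psi$, take a partition of unity. Choose a locally finite cover of $M$ by evenly covered open sets $U_i$ with subordinate functions $\rho_i$. For each $i$ pick one sheet $\tilde U_i$ over $U_i$ and define
\[
\psi := \sum_i \sum_{\gamma} c_\gamma^{-1}\,(\rho_i\circ\pi)\,\mathbf{1}_{\gamma\tilde U_i}.
\]
Each term is smooth because the sheets are disjoint open sets, and the sum is locally finite. The function is positive, since every point lies over some $U_i$ with $\rho_i > 0$. It has the required equivariance by reindexing $\gamma$. Alternatively, one can simply take $\psi = e^{\varphi}$ with $\varphi$ a primitive built from the cocycle $\log c_\gamma$.

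The main obstacle is this construction of an equivariant positive function. It is routine, but it requires handling the sheets correctly, and the direction and convention of $c_\gamma$ must be fixed consistently. The dimension-one case needs no separate care in either direction, since there every Hermitian form is Kähler.
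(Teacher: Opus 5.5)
The paper gives no proof of this proposition; it only cites Ornea--Verbitsky's monograph. Your argument is the standard one, and your forward direction is correct as written. Your converse has the right strategy: produce a positive automorphic function, divide by it, and descend. However, the convention is inverted, so the step as displayed fails.

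For $x \in \pi^{-1}(U_i)$, let $\gamma_i(x)$ be the unique deck transformation with $x \in \gamma_i(x)\tilde U_i$. Then $\gamma_i(\gamma_0 x) = \gamma_0\gamma_i(x)$. Since $\gamma \mapsto c_\gamma$ is a homomorphism, your formula gives
\[
\psi(\gamma_0 x) = \sum_{i:\, \pi(x)\in U_i} \rho_i(\pi(x))\, c_{\gamma_0\gamma_i(x)}^{-1} = c_{\gamma_0}^{-1}\,\psi(x),
\]
that is, $\gamma^*\psi = c_\gamma^{-1}\psi$ and not $c_\gamma\psi$. Consequently $\gamma^*(\psi^{-1}\tilde\omega) = c_\gamma^{2}\,\psi^{-1}\tilde\omega$, so the form you propose to descend is not deck-invariant.

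Your later sentences (``$\omega$ equals $\psi\tilde\omega$ transported down, so $\psi^{-1}\omega$ is K\"ahler'', $f_i = \log\psi$) are consistent with the formula you wrote, but not with the goal you stated. Pick one convention and use it throughout. Either replace $c_\gamma^{-1}$ by $c_\gamma$: then, for a local section $s\colon U \to \tilde M$, the form $(\psi\circ s)\,\omega|_U = s^*\tilde\omega$ is K\"ahler, so $f_U = -\log(\psi\circ s)$. Or keep your $\psi$ and descend $\psi\tilde\omega$ instead. This is a convention slip, not a missing idea. Your checks of smoothness, local finiteness and positivity are correct. They use $\operatorname{supp}\rho_i \subset U_i$, the disjointness and openness of the sheets, and the paper's Galois convention, which ensures the $\gamma\tilde U_i$ exhaust $\pi^{-1}(U_i)$.

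A cleaner way to get the automorphic function avoids both the sheet bookkeeping and the sign issue. Fix any Hermitian metric $g_0$ on $M$ and set $\psi := |\tilde\omega|_{\pi^*g_0}$. Each deck transformation is an isometry of $\pi^*g_0$, so $\gamma^*\psi = |\gamma^*\tilde\omega|_{\pi^*g_0} = c_\gamma\psi$ automatically. Moreover $\psi$ is smooth and positive because $\tilde\omega$ vanishes nowhere.
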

	
	\smallskip
	
	Note that the covering in the preceding proposition can always be taken to be the universal cover, but for many lcK manifolds there are intermediate coverings that also satisfy the condition; in fact, the smallest is the minimal covering on which the pullback of $\theta$ is exact. We call this \textit{the minimal \K \ cover}.
	
	\medskip 
	
	We now give the definition of a particular type of lcK manifold, introduced by Ornea and Verbitsky:
	
	\begin{definition}
		\label{def:lckpot}
		An lcK manifold $(M, \omega)$ is called \textit{lcK with potential} if it has a \K \ covering $(\tilde{M}, \tilde{\omega})$ such that $\tilde{\omega} = i \del \delb \varphi$ for some smooth $\varphi: \tilde{M} \to \R_{>0}$ on which the deck group acts by homotheties \ie $\gamma^* \varphi = c_\gamma \varphi$. In this case, the constants $c_\gamma$ are clearly the same as in \ref{def:lckcover}.
	\end{definition}
	
	\smallskip
	
	One reason to study lcK manifolds with potential is that while lcK structures are not stable under small deformations, it turns out that the smaller class of lcK manifolds with potential does have this property. They are also a generalization of Vaisman manifolds, a well-studied subclass of lcK manifolds. For a comprehensive study of the known results on lcK manifolds with potential, see \cite[Chapter 12]{ovbook}.
	
	
	\bigskip
	
	\textbf{The twisted Bott-Chern groups.} We now give the definition of some crucial objects in the main proof, namely the (twisted) Bott-Chern groups, and fix some notations that we will use later.
	
	Let $M$ be a complex manifold and $\theta \in \Omega^1(M)$ be a closed real $1$-form. The $\theta$-twisted Bott-Chern cohomology groups are defined by replacing the usual differential operators with their twisted counterparts in the standard definition:
	\[
	H^{p, q}_{BC, \theta} = \frac{\{\eta \in \Omega^{p, q}(M) \ | \ \del_\theta \eta = \delb_\theta \eta= 0\} }{\del_\theta \delb_\theta \left( \Omega^{p-1, q-1}(M) \right)}.
	\]
	If $M$ is compact, essentially the same elliptic theory argument as for the usual Bott-Chern cohomology shows that this is a finite-dimensional complex vector space (\cite[Theorem 18.35]{ovbook}).
	
	It will be more convenient to work with the real (\ie fixed by conjugation) part of the above space, and we will only need to consider it in bidegree $(1, 1)$ for what follows:
	\begin{equation}
		\label{eq:notatiespatiu}
		V_\theta := \left(H^{1, 1}_{BC, \theta}\right)_\R = \frac{\{\eta \in \Omega^{1, 1}(M) \ | \ \eta \text{ real, } \del_\theta \eta = \delb_\theta \eta = 0\} }{i \del_\theta \delb_\theta \left( \mathcal{C}^\infty(M, \R)\right)}.
	\end{equation}
	Clearly $V^\C_\theta \simeq H^{1, 1}_{BC, \theta}$ so, by the above, if $M$ is compact, $V_\theta$ is a finite-dimensional real vector space. Moreover, with the notations
	\begin{equation}
		\label{eq:notatiespatiu2}
		Z_\theta = \{\eta \in \Omega^{1, 1}(M) \ | \ \eta \text{ real, } \del_\theta \eta = \delb_\theta \eta = 0\}, \ B_\theta = i \del_\theta \delb_\theta \left( \mathcal{C}^\infty(M, \R)\right), \text{ hence  } V_\theta = Z_\theta / B_\theta,
	\end{equation}
	we have that $Z_\theta$ is a Fr\' echet space (being closed in a Fr\' echet space of smooth forms, see \eg \cite[Part II, Corollary 1.3.9]{ham82}) and, if $M$ is compact, $B_\theta$ is closed in $Z_\theta$ (\cite[Theorem 18.15]{ovbook}). It then follows from the open mapping theorem that the Fr\' echet topology on $V_\theta$ is the same as the usual finite-dimensional topology; in particular, $q: Z_\theta \to V_\theta$ is continuous.
	
	\medskip
	
	We will work with the above space of Bott-Chern cohomology classes in the proof of \ref{thmprodus}, where the aim will be to prove a factor of the product admits an lcK form with vanishing Bott-Chern class. This is justified by a theorem we reproduce below:
	
	\begin{theorem}(\cite[Theorem 18.37]{ovbook})
		\label{thm:bc0pot}
		Let $(M, \omega, \theta)$ be a compact lcK manifold. Suppose $[\omega]_{BC} = 0$. Then $M$ admits a (perhaps different) lcK structure with potential.
	\end{theorem}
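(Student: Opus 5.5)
The plan is to move to a \K \ cover on which $\theta$ becomes exact, where twisted operators turn into untwisted ones after a conformal rescaling. There the hypothesis $[\omega]_{BC}=0$ makes the \K \ form $\partial\overline\partial$-exact with an automorphic potential, and it remains to arrange that this potential is positive.

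\textbf{Step 1 (untwisting).} Let $\pi:\tilde M\to M$ be the minimal \K \ cover, so that $\pi^*\theta = d\varphi$ for some $\varphi\in\mathcal C^\infty(\tilde M)$. On $\tilde M$ one checks directly that
\[
d_\theta = e^{\varphi}\, d\, e^{-\varphi}, \qquad \del_\theta = e^{\varphi}\,\del\, e^{-\varphi}, \qquad \delb_\theta = e^{\varphi}\,\delb\, e^{-\varphi},
\]
since for instance $e^{\varphi}\delb(e^{-\varphi}\eta) = \delb\eta - (d\varphi)^{0,1}\wedge\eta$. Hence $\del_\theta\delb_\theta = e^{\varphi}\,\del\delb\, e^{-\varphi}$.

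\textbf{Step 2 (the potential).} The hypothesis $[\omega]_{BC}=0$ in $V_\theta$ gives, by \eqref{eq:notatiespatiu}, a real function $f\in\mathcal C^\infty(M,\R)$ with $\omega = i\del_\theta\delb_\theta f$. Pulling back and using Step 1,
\[
\tilde\omega := e^{-\varphi}\pi^*\omega = i\del\delb\,\psi, \qquad \psi := e^{-\varphi}\,\pi^*f .
\]
Here $\tilde\omega$ is the \K \ form of \ref{def:lckcover}. A deck transformation $\gamma$ changes $\varphi$ by a constant, $\gamma^*\varphi=\varphi - \log c_\gamma$. Consequently $\gamma^*\psi = c_\gamma\psi$ with the same constants $c_\gamma$ as for $\tilde\omega$. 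So $\psi$ is a strictly plurisubharmonic automorphic potential for $\tilde\omega$.

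\textbf{Step 3 (positivity), the main obstacle.} \ref{def:lckpot} additionally requires $\psi>0$, while $f$ has a priori no sign. This is why the theorem only promises a possibly different lcK structure.

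\begin{itemize}
\item \emph{First attempt: $f$ has a sign.} I would try a maximum-principle argument at extremal points of $f$ on the compact $M$. At a critical point of $f$ one has
\[
i\del_\theta\delb_\theta f = i\del\delb f - f\,\bigl(i\del\theta^{0,1} - i\theta^{1,0}\wedge\theta^{0,1}\bigr).
\]
If $f$ has a sign, then after the substitution $f\mapsto -f$ (if needed, with a matching adjustment of the structure) we get $\psi>0$ and we are done.

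\item \emph{Fallback if $f$ changes sign.} Exploit the non-triviality of the character $\gamma\mapsto c_\gamma$, which holds because $M$ is strict lcK. Pick $\gamma$ with $c_\gamma\neq 1$. Along the orbits $\gamma^{\pm n}x$ the function $\psi$ scales geometrically, which controls its sign near the ends of $\tilde M$. Then replace $\psi$ by a positive automorphic strictly plurisubharmonic function of the same homogeneity, built from $\psi$ and a positive automorphic function such as $e^{-\varphi}$. The goal is to keep $i\del\delb$ of the new function positive modulo a small perturbation. Compactness of $M$ makes all such perturbation estimates uniform.
\end{itemize}

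The resulting positive automorphic potential then defines, by descent through \ref{def:lckpot}, the required lcK structure with potential on $M$.
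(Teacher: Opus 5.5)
There is a genuine gap. The paper itself gives no proof of this statement; it quotes it from \cite[Theorem 18.37]{ovbook}. Judged on its own terms, your Steps 1 and 2 are correct, but they only reformulate the hypothesis. The condition $[\omega]_{BC}=0$ is \emph{equivalent} to the existence of an automorphic K\"ahler potential $\psi=e^{-\varphi}\pi^*f$ on the cover, with no prescribed sign. The entire content of the theorem is therefore Step 3, and there you give no argument.

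The sign problem is real. On the Hopf manifold $(\C^n\setminus 0)/\langle z\mapsto\lambda z\rangle$ with $0<\lambda<1$ real, take $\psi=|z|^2+C\,\mathrm{Re}(z_1^2)$ with $C>1$. It is an automorphic potential of $i\del\delb|z|^2$, since the added term is pluriharmonic and scales by $\lambda^2$. Yet it is negative at $(i,0,\dots,0)$.

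Your first attempt also contains an error. Replacing $f$ by $-f$ is not allowed, since $i\del_\theta\delb_\theta(-f)=-\omega$ is negative, and no ``adjustment of the structure'' repairs that. What is true is that $f<0$ everywhere is impossible. Set $g=-f$ and $\theta'=\theta-d\log g$. The computation in the proof of Lemma~\ref{lem} then gives $dJ\theta'=\theta'\wedge J\theta'+2\omega/g$. Hence $d\bigl(J\theta'\wedge(\omega/g)^{n-1}\bigr)=(\|\theta'\|^2_{\omega/g}+2)(\omega/g)^n$, which contradicts Stokes. But this only disposes of the case where $f$ has a sign.

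The fallback is a wish, not an argument. Since $c_\gamma>0$, the sign of $\psi$ is constant along $\Gamma$-orbits, so geometric scaling says nothing about the sign ``near the ends''. The natural positive modifications built from $\psi$ and $e^{-\varphi}$ have the form $e^{-\varphi}G(e^{\varphi}\psi)$, which corresponds on $M$ to $G(f)$. For these one computes
\[
i\del_\theta\delb_\theta\, G(f)=G'(f)\,\omega+G''(f)\,i\del f\wedge\delb f+\bigl(G(f)-fG'(f)\bigr)\sigma,\qquad \sigma:=i\del_\theta\delb_\theta 1=-i\del\theta^{0,1}+i\theta^{1,0}\wedge\theta^{0,1}.
\]
For $G>0$ increasing, $(G-fG')/G'>|f|$ wherever $f<0$. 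So after dividing by $G'$ you need $\omega+c\,\sigma>0$ with $c>|f|$, which is not a small perturbation; the $G''$-term only helps in the rank-one direction $\del f$. Moreover $\sigma$ has no sign in general. Choosing $\theta$ in its class with $\sigma>0$ would already exhibit an lcK metric with potential $1$, so that route is circular. Compactness makes estimates uniform, but it cannot create a sign.

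What is missing is exactly this statement: a compact lcK manifold whose lcK form has a global, not necessarily positive, twisted potential admits an lcK metric with \emph{positive} potential. This is Ornea--Verbitsky's positivity of lcK potentials. Given your Steps 1--2 it is equivalent to the theorem itself, so as written your proof stops after the routine part.
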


	\begin{remark}
		\label{rmk:potestrictlck}
		Note that if $(M, \omega, \theta)$ is a compact lcK manifold with $[\omega]_{BC} = 0$, it is automatically strict lcK. Indeed, if $\omega = i\del_\theta \delb_\theta \psi$ for some $\psi: M \to \R$ and $\theta = df$, then we immediately have that $i\del \delb (e^{-f} \psi) = e^{-f} \omega =: \omega'$, which is then a $dd^c$-exact \K \ form. But this is impossible due to Stokes' theorem:
		\[
		0 < \int_M (\omega')^n = \int_M d(\eta \wedge (d\eta)^{n-1}) = 0,
		\]
		where $\eta = \frac{1}{2}d^c (e^{-f} \psi)$ and $n$ is the complex dimension of $M$.
	\end{remark}
	
	\bigskip
	
	\textbf{A partial non-existence result.} As mentioned in the introduction, the most general result proved up to now, due to Ornea-Verbitsky-Vuletescu and comprised of several theorems treating different cases, shows that no product between any compact complex manifold and any \textit{known} example of compact lcK manifold can be strict lcK. We end this section by reproducing the theorem that we need in our main proof:
	
	\begin{theorem}(\cite[Theorem 3.3]{ovv24})
		\label{thm:ovv}
		Let $X$ be a compact lcK manifold with potential and $Y$ any compact complex manifold, $\dim Y > 0$. Then the product $X \times Y$ does not admit a strict lcK structure.
	\end{theorem}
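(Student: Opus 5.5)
The plan is to argue by contradiction. Suppose $\omega$ is a strict lcK form on $X\times Y$ with Lee form $\theta$. By the Künneth formula, $H^1(X\times Y,\R)=H^1(X,\R)\oplus H^1(Y,\R)$, so after a conformal change I can write $\theta=\theta_X+\theta_Y$, where $\theta_X$ and $\theta_Y$ are closed $1$-forms pulled back from $X$ and $Y$, with classes $a=[\theta_X]$ and $b=[\theta_Y]$. I split into cases according to whether $a$ and $b$ vanish. Strictness rules out $a=b=0$.

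\textbf{Case $a=0$ or $b=0$.} Suppose $a=0$; the case $b=0$ is symmetric. Let $\hat Y\to Y$ be the minimal cover on which $\theta_Y$ becomes exact, say $\theta_Y=df$. On $X\times\hat Y$ the form $\tilde\omega=e^{-f}\omega$ is K\"ahler. A deck transformation $\gamma$ with $c_\gamma\neq 1$ satisfies $\gamma^*\tilde\omega=c_\gamma\tilde\omega$, and such $\gamma$ exists because $b\neq0$. The slices $X\times\{\hat y\}$ are compact complex submanifolds, and they are all homologous in the connected manifold $X\times\hat Y$. Since $\tilde\omega$ is closed, their volumes $\int_{X\times\{\hat y\}}\tilde\omega^n$ are independent of $\hat y$, where $n=\dim X$. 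On the other hand $\gamma$ maps one slice to another and multiplies the volume by $c_\gamma^n\neq1$. This is a contradiction. Note that this case does not use the potential at all.

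\textbf{Case $a\neq0$ and $b\neq0$.} This is where the potential on $X$ must enter, and I expect it to be the main obstacle. The slice argument above fails because on the Kähler cover $\tilde X\times\hat Y$ neither family of slices is compact. My first attempt is to reduce to the previous case by changing the Lee class.

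For a compact lcK manifold with potential, the Lee classes of lcK-with-potential structures can be moved within an open set of $H^1(X,\R)$. Moreover, the minimal Kähler cover can be taken to be a $\mathbb Z$-cover $\tilde X$ with $\tilde X\cup\{0\}$ a Stein space, carrying a proper automorphic psh potential $\varphi$. I would use $\varphi$ to construct, on $\tilde X\times\hat Y$, a function that is psh or pluriharmonic in the $X$-directions.

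Concretely, I would work on $\tilde X\times Y$, covering only in the $X$-direction. Pick a primitive $\theta_X=dg$ on $\tilde X$. Then $\omega'=e^{-g}\omega$ satisfies $d\omega'=\theta_Y\wedge\omega'$. Define
\[
u(\tilde x)=\int_{\{\tilde x\}\times Y}\omega'^m ,\qquad m=\dim Y .
\]
This $u$ is automorphic of weight $c_\gamma^m$ on $\tilde X$. I would compute $i\partial\bar\partial u$ by pushing forward along the compact fibre $Y$, using $d\omega'=\theta_Y\wedge\omega'$ together with the fact that $\theta_Y$ restricted to each fibre is the Lee form of an lcK metric on $Y$. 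The aim is to show that $u$ is plurisubharmonic, or at least satisfies a maximum principle.

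Once that is established, I extend $u$ across the cone point $0$ of the Stein space $\tilde X\cup\{0\}$. Then $u$ tends to $0$ along the direction where $\gamma$ contracts and grows along the other, and this should contradict the maximum principle on $\tilde X\cup\{0\}$. This mirrors the standard argument that lcK with potential forces a psh exhaustion.

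If the pushforward computation does not give a sign, I would fall back on the deformation of the Lee class of $X$. Here the idea is that the obstruction depends only on whether the restricted class $a$ is a Lee class of some potential structure. One could then replace $\omega$ by a conformally modified form whose $X$-component of the Lee class is adjusted, so as to reach the potential-compatible situation on $X$ where the argument above applies.
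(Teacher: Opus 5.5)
Your first case is correct. If $a=0$ (or $b=0$), the slices $X\times\{\hat y\}$ of the K\"ahler cover $X\times\hat Y$ are compact and homotopic, and $\mathrm{id}\times\gamma$ rescales their volume by $c_\gamma^{n}\neq 1$. The potential plays no role there. The substance of the statement is the mixed case $a\neq0$, $b\neq0$; the paper imports this from \cite{ovv24} without proof. Your sketch does not prove it, and as designed it cannot.

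The sign of $i\partial\bar\partial u$ is not merely unverified. Locally $\theta_Y=df$ and $\omega'=e^{f}\omega_K$ with $\omega_K$ K\"ahler, so
\[
i\partial\bar\partial(\omega'^m)=i\partial\bar\partial(e^{mf})\wedge\omega_K^m=m\, i\partial\theta_Y^{0,1}\wedge\omega'^m+m^2\, i\theta_Y^{1,0}\wedge\theta_Y^{0,1}\wedge\omega'^m .
\]
The second term pushes forward to a semipositive form, but the first has no sign in general and the two compete. Already for the standard Vaisman metric on a Hopf surface, the first term is seminegative and $e^{mf}=|z|^{-2m}$ is neither psh nor plurisuperharmonic.

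More seriously, a psh $u$ would give no contradiction anyway. Write $u=e^{-mg}v$ with $v(x)=\int_{\{x\}\times Y}\omega^m>0$ on $X$. When $[\theta_X]$ is a positive multiple of the Lee class of the potential structure, $u$ has the automorphy of a positive power of $\varphi$. It then extends by $0$ to the cone point and is positive elsewhere. A positive psh function with a minimum at the cone point violates no maximum principle: $\varphi$ itself is one. You would need $u$ plurisuperharmonic or pluriharmonic, as in the twisted-pluriharmonic obstruction of Lemma \ref{lem}, and nothing in your computation points that way.

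The setup of the mixed case is also unjustified. You take a primitive of $\theta_X$ on the $\mathbb{Z}$-cover $\tilde X$ of the potential structure. But $\theta_X$ becomes exact there only if the period homomorphism of $a$ factors through that $\mathbb{Z}$. If the periods of $a$ generate a subgroup of $\mathbb{R}$ of rank at least $2$, no $\mathbb{Z}$-cover works at all. Nothing ties $a$, which is the Lee class of $\omega|_{X\times\{y\}}$, to the Lee class of the given potential structure.

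Your fallback is impossible. Replacing $\omega$ by a conformally modified form with an adjusted $X$-component of the Lee class cannot work: $e^{h}\omega$ has Lee form $\theta+dh$, so $(a,b)$ is an invariant of the conformal class of $\omega$. Changing the class would require a genuinely different lcK metric on $X\times Y$, and you give no construction.

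The missing ingredient is an actual mechanism that turns the potential on $X$ into an obstruction on $X\times Y$ when the Lee class has components in both factors. As it stands, the proposal proves only the case where the Lee class of $\omega$ comes from one factor.
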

		
	\section{The main result}
	\label{sec:main}
	
	We first prove an obstruction that will prove crucial for our main theorem:
	
	\begin{lemma}
		\label{lem}
		Let $(M, \omega, \theta)$ be a compact, strict lcK manifold of complex dimension $n$. Then $M$ admits no real positive twisted pluriharmonic function \ie $f: M \to \R$, $f > 0$, such that $\del_\theta \delb_\theta f = 0$.
		
		\begin{proof}
			Choose a covering $\tilde{M} \xrightarrow{\pi} M$ such that $\pi^* \theta = d\rho$, so $e^{-\rho} \pi^* \omega$ is \K \ on $\tilde{M}$.
			
			Assume such an $f > 0$ exists and take $F: \tilde{M} \to \R$, $F = e^{-\rho} \pi^* f$. It follows that $\pi^* (\del_\theta \delb_\theta f) = e^\rho \del \delb F$, hence $\del \delb F = 0$; equivalently, $dd^c F = 0$.
			
			Now consider the lcK form $\omega' = f^{-1} \omega$ on $M$, with Lee form $\theta' = \theta - d \log f$. We have that $\pi^* \theta' = d(\rho - \log (e^\rho F)) = -d \log F = - F^{-1} dF$, so $d^c F = JdF = -F J (\pi^* \theta')$. It follows that 
			\[
			0 = d d^c F = -dF \wedge J(\pi^* \theta') - F dJ(\pi^* \theta') = F \pi^* \theta' \wedge J(\pi^* \theta') - F dJ(\pi^* \theta'),
			\]
			hence $\theta' \wedge J \theta' = dJ \theta'$ on $M$. Using this, we get that
			\begin{equation}
				\label{eqptStokes}
				\begin{split}
					d\left( J \theta' \wedge (\omega')^{n-1} \right) &= \theta' \wedge J \theta' \wedge (\omega')^{n-1} - (n-1) J \theta' \wedge \theta' \wedge (\omega')^{n-1} = n \theta' \wedge J\theta' \wedge (\omega')^{n-1} \\ &= \| \theta' \|_{\omega'}^2 (\omega')^n,
				\end{split}
			\end{equation}
			where the last equality is a standard identity true for any real $1$-form (see \eg \cite[Proposition 1.2.31]{huy05}).
			
			Integrating \eqref{eqptStokes} on $M$ and using Stokes' theorem, we obtain that $\theta' = 0$ \ie $\theta = d \log f$, a contradiction with our assumption that $\omega$ was strict lcK. 
		\end{proof}
	\end{lemma}
	
	\bigskip
	
	We can now prove the main theorem:
	
	\begin{theorem}
		\label{thmprodus}
		Let $X, Y$ be compact complex manifolds of positive dimension. Then $X \times Y$ admits no strict lcK metric.
		
		\begin{proof}
			The case where one of the factors is $1$-dimensional is known (\cite[Corollary 3.4]{ovv24}), so we may assume $\dim_\C X$, $\dim_\C Y \ge 2$.
			
			Assume $X \times Y$ carries a strict lcK metric $\Omega$ with Lee form $\Theta$. By the (real) K\" unneth formula, $\Theta = p^*_X \alpha + p^*_Y \beta + dh$, where $\alpha$ and $\beta$ are closed $1$-forms on $X$ and $Y$ respectively. By multiplying $\Omega$ with $e^{-h}$, we may assume that 
			\begin{equation}
				\label{eq:Theta}
				\Theta = p^*_X \alpha + p^*_Y \beta.
			\end{equation}
			Since $\Theta$ is not exact, at least one of $\alpha$ and $\beta$ is not exact; we assume that $[\beta]_{\textrm{dR}} \neq 0$ on $Y$.
			
			The $(1, 1)$ lcK form $\Omega$ admits a decomposition with respect to the product structure; denoting by $\Omega^{p, q; r, s}(X \times Y)$ the space of forms with bidegree $(p, q)$ in the $X$-coordinates and $(r, s)$ in the $Y$-coordinates, we can write $\Omega = A + B + C + D$ uniquely, where $A \in \Omega^{1, 1; 0, 0}(X \times Y), B \in \Omega^{1, 0; 0, 1}(X \times Y)$, $C \in \Omega^{0, 1; 1, 0}(X \times Y)$ and $D \in \Omega^{0, 0; 1, 1}(X \times Y)$.
			
			The decomposition of the Lee form \eqref{eq:Theta} also determines a splitting of the associated twisted operators: $\del_\Theta = \del^X_\alpha + \del^Y_\beta$ and $\delb_\Theta = \delb^X_\alpha + \delb^Y_\beta$, where for example $\del_\alpha^X = (\del^X - \alpha^{1, 0} \wedge \cdot )$ and $\del^X$ is taken to mean the derivative with respect to the $X$-coordinates. Note that any two such operators with respect to different factors anti-commute. The usual lcK conditions $\del_\Theta \Omega = 0$ and $\delb_\Theta \Omega = 0$ can then be written equivalently as:
			\begin{equation*}
				\begin{cases}
					\del^X_\alpha A &= 0 \\
					\del^X_\alpha B &= 0 \\
					\del^Y_\beta A + \del^X_\alpha C &= 0 \\
					\del^Y_\beta B + \del^X_\alpha D &= 0 \\
					\del^Y_\beta C &= 0 \\
					\del^Y_\beta D &= 0
				\end{cases}
				\ \ \text{ and } \ \
				\begin{cases}
					\delb^X_\alpha A &= 0 \\
					\delb^X_\alpha C &= 0 \\
					\delb^Y_\beta A + \delb^X_\alpha B &= 0 \\
					\delb^Y_\beta C + \delb^X_\alpha D &= 0 \\
					\delb^Y_\beta B &= 0 \\
					\delb^Y_\beta D &= 0
				\end{cases}.
			\end{equation*} 
			From these identities, we observe two facts about the real form $A \in \Omega^{1, 1; 0, 0}(X \times Y)$. Firstly, note that $d^X_\alpha A = 0$, so one may consider its Bott-Chern class on the manifold $X \times \{y\} \simeq X$ for any level $y \in Y$. Secondly, 
			\[
			0 = \del^Y_\beta \left( \delb^Y_\beta A + \delb^X_\alpha B \right) = \del^Y_\beta \delb^Y_\beta A +  \del^Y_\beta \delb^X_\alpha B =  \del^Y_\beta \delb^Y_\beta A - \delb^X_\alpha \del^Y_\beta B = \del^Y_\beta \delb^Y_\beta A + \delb^X_\alpha \del_\alpha^X D,
			\]
			so
			\begin{equation}
				\label{eq:AsiD}
				\del^Y_\beta \delb^Y_\beta A = \del^X_\alpha \delb_\alpha^X D.
			\end{equation}
			
			We use the notations from \eqref{eq:notatiespatiu} and \eqref{eq:notatiespatiu2}:
			\[
			Z_\alpha := \{\eta \in \Omega^{1, 1}(X) \ | \ \eta \text{ real, } \del_\alpha \eta = \delb_\alpha \eta = 0\}, \ B_\alpha := i \del_\alpha \delb_\alpha \left( \mathcal{C}^\infty(X, \R)\right), \ V_\alpha := Z_\alpha / B_\alpha;
			\]
			since $X$ is compact, the projection map $q: Z_\alpha \to V_\alpha$ is then continuous between a Fr\' echet and a finite-dimensional space. Take $\Phi: Y \to Z_\alpha, \Phi(y) = A_{| X \times \{y\}}$, which is clearly smooth, and consider the map $\varphi: Y \to V_\alpha$, $\varphi = q \circ \Phi$. 
			
			We now see $\Phi$ and $D$ as forms on $Y$ with values in Fr\' echet spaces, namely $\Phi \in  \Omega^{0,0}(Y, Z_\alpha^\C)$ and $D \in \Omega^{1, 1}(Y, \mathcal{C}^\infty(X, \C))$. The equation \eqref{eq:AsiD} then becomes $\del^Y_\beta \delb^Y_\beta \Phi = \del^X_\alpha \delb_\alpha^X D$ in the space $\Omega^{1, 1}(Y, Z_\alpha^\C)$.
			
			Extending $q$ $\C$-linearly and acting component-wise on $Z_\alpha$-valued forms, we can apply it to this last relation to get $q(\del^Y_\beta \delb^Y_\beta \Phi) = q (\del^X_\alpha \delb_\alpha^X D)$ in $\Omega^{1, 1}(Y, V_\alpha^\C)$. As $q$ commutes with the differential operators in the $Y$ direction and since $q(\del^X_\alpha \delb_\alpha^X D) = 0$ due to the definition of $V_\alpha$, we get
			\begin{equation}
				\label{eq:pluriarm}
				\del_\beta \delb_\beta \varphi = 0,
			\end{equation}
			\ie each component of $\varphi$ as a real vector-valued function is $\del_\beta \delb_\beta$-closed. 
			
			\medskip
			
			We now consider the image of $\varphi$: since $Y$ is compact, $\Img \varphi \subset V_\alpha$ is compact and so is its convex hull, which we denote by $K$. Note that all Bott-Chern classes in $\Img \varphi$ have a positive representative by definition. We distinguish two cases:
			
			\textbf{Case 1:} If $0 \notin K$, by Hahn-Banach, we can pick a linear map $T: V_\alpha \to \R$ such that $T$ is strictly positive on $K$. Then, by \eqref{eq:pluriarm}, the function $f = T \circ \varphi$ satisfies $\del_\beta \delb_\beta f = 0$ and $f > 0$, contradicting \ref{lem}, since we assumed $[\beta] \neq 0$.
			
			\textbf{Case 2:} If $0 \in K$, there exist $y_1, ..., y_N \in Y$ and $t_1, ..., t_N \ge 0$ with $\sum\limits_{i=1}^N t_i = 1$ such that $\sum\limits_{i=1}^N t_i \varphi(y_i) = 0 \in V_\alpha$; by the definition of $V_\alpha$, this means there exist $\omega_1, ..., \omega_N \in \Omega^{1, 1}(X)$ positive $d_\alpha$-closed Hermitian metrics such that $\omega := \sum\limits_{i=1}^N t_i \omega_i$, which is also a positive $d_\alpha$-closed Hermitian metric, is $\del_\alpha \delb_\alpha$-exact. By \ref{rmk:potestrictlck}, $[\alpha] \neq 0$ and, by \ref{thm:bc0pot}, $X$ admits an lcK with potential metric. This reverts the situation to the known case where one of the factors of $X \times Y$ is with potential, and we obtain a contradiction by \ref{thm:ovv}, thus concluding the proof.
		\end{proof}
		
	\end{theorem}
  
	\bigskip
	
	We end by mentioning a natural extension of \ref{thmprodus} that would now be interesting to consider. In \cite{ps23b} and \cite{ps26}, the authors proved a generalization of the Vaisman theorem for lcK spaces with singularities, showing that the separation between \K \ and strict lcK carries over to compact complex spaces, under certain conditions. Therefore, it is natural to now ask:
	
	\smallskip
	
	\begin{question}
		If $X, Y$ are positive dimensional compact complex spaces, can $X \times Y$ admit a strict lcK structure?
	\end{question}
	
	\smallskip
	
	Obviously, in light of this paper, at least one of the factors would have to be singular. Notably, since the Vaisman theorem proved in \cite{ps23b}, \cite{ps26} is valid under certain conditions (for example, local ireducibility), and counterexamples exist for the general case (\cite[Examples 4.5]{ps23b}), a negative answer to the above question is not likely to be obtained by simply reducing it to the smooth case and might require new methods.
	
	\bigskip
	
	\textbf{Acknowledgements.} The author is grateful to Liviu Ornea and Victor Vuletescu for a careful reading of this manuscript before publication.
	
	\bigskip
	
	\textbf{Use of AI tools.} The author acknowledges the use of general-purpose large language models as an aid for searching the literature, discussing proof strategies and for proof checking. No AI assistance was used in writing this manuscript and the author takes responsibility for its correctness.

\end{document}